\title{On the ramification of \'{e}tale cohomology groups}
\author{Isabel Leal\footnote{{Department of Mathematics, University of Chicago, 5734 S. University Avenue, 60637, Chicago, USA Email: \texttt{isabel@math.uchicago.edu} }}}
\begin{document}
\maketitle 
\begin{abstract}
	Let $K$ be a complete discrete valuation field whose residue field is perfect and of positive characteristic, let $X$ be a connected, proper scheme over $\OO_K$, and let $U$ be the complement in $X$ of a divisor with simple normal crossings.
	
	Assume  that the pair $(X,U)$ is strictly semi-stable over $\OO_K$  of relative dimension one and $K$ is of equal characteristic. We prove that, for any smooth $\ell$-adic sheaf $\GG$  on $U$ of rank one, at most tamely ramified on the generic fiber, if the ramification of $\GG$ is bounded by $t+$ for the logarithmic upper ramification groups of Abbes-Saito at points of codimension one of $X$, then the ramification of  the  \'{e}tale cohomology groups with compact support of $\GG$ is  bounded by $t+$ in the same sense.
\end{abstract}

\section{Introduction}

Let $K$ be a complete discrete valuation field with perfect residue field $k$ of characteristic $p>0$. 
Let $X$ be a connected, proper scheme 
over $\OO_K$, $D$ a divisor with simple normal crossings on $X$, and $U=X-D$.  Assume that the pair $(X,U)$ is strictly semi-stable over $\OO_K$ of relative dimension $d$ (see Definition \ref{def:semi}). 

Let  $\ell$ 	be  a prime number different from $p$ and  $\GG$ be a smooth $\ell$-adic sheaf on $U$, by which we mean a smooth $\overline{\Q_\ell}$-sheaf on $U$.  Assume that $\GG$ is at most tamely ramified on the generic fiber $X_K$. Write $D=\bigcup\limits_{i=1}^n D_i$, where $D_i$ are the irreducible components of $D$. Let $\xi_i$ be the generic point of $D_i$, $\OO_{M_i}=\OO^h_{X,\xi_i}$ the henselization of the local ring at $\xi_i$,  $M_i$ its field of fractions, and $\eta_i=\Spec M_i$. 

Let $G_{M_i}$ and $G_{K}$ denote the absolute Galois groups of $M_i$ and $K$, respectively,  and \linebreak $(G_{M_i,\,\log}^t)_{t\in\Q_{\geq 0}}$, $(G_{K,\,\log}^t)_{t\in\Q_{\geq 0}}$  the corresponding Abbes-Saito logarithmic upper ramification filtrations  (see \cite{abbessaito}). Put, for a real number $s\geq 0$, 
$G^{s+}_{M_i,\,\log}=\overline{\bigcup\limits_{t\in \Q, t>s} G_{M_i,\,\log}^t}$
and
$G^{s+}_{K,\,\log}=\overline{\bigcup\limits_{t\in \Q, t>s} G_{K,\,\log}^t}$.   Then we have the following conjecture:

\begin{conjecture} \label{conjecture}
	Under the assumptions above, if  $G_{M_i,\,\log}^{t+}$ acts trivially on  $\GG_{\overline{\eta_i}}$ for every $i$,  then  $G_{K,\,\log}^{t+}$ acts trivially on $H^j_c(U_{\overline{K}},\GG)$ for every $j$.
\end{conjecture}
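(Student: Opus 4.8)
The plan is to realize the cohomology groups as the fiber of a proper pushforward and to reduce the desired upper-numbering bound to the ramification of $\GG$ along the horizontal boundary via the geometric definition of the Abbes-Saito filtration. Write $f\colon X\to S=\Spec\OO_K$ for the structure morphism and $j\colon U\hookrightarrow X$ for the open immersion; by hypothesis $f$ is proper and strictly semi-stable of relative dimension one, so its fibers are semi-stable curves. Let $\overline\eta$ and $s$ denote a geometric generic point and the closed point of $S$. By proper base change, $H^j_c(U_{\overline K},\GG)$ is the stalk at $\overline\eta$ of $R^jf_*(j_!\GG)$, carrying its natural $G_K$-action, and equivalently it is computed by the nearby-cycle complex $R\Psi(j_!\GG)$ on the special fiber $X_s$ together with the action of $G_K$ (inertia plus Frobenius). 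The target statement that $G^{t+}_{K,\log}$ acts trivially is the assertion that every logarithmic ramification break of this $G_K$-representation is $\le t$, so the task is a non-increase of logarithmic slopes under $Rf_!$.

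First I would isolate the wild part of the monodromy. Because $(X,U)$ is strictly semi-stable, the monodromy contributed by the family itself is unipotent, hence tame: by the log-smoothness of $f$ away from $D$ and the Picard--Lefschetz/Rapoport--Zink description of nearby cycles at the nodes, the constant-coefficient nearby cycles are tamely ramified. Consequently any wild ramification of $R\Psi(j_!\GG)$ must originate from the wild ramification of $\GG$, which, since $\GG$ is tame on the generic fiber $X_K$, is concentrated along the components $D_i$ meeting the special fiber. In the rank-one, equal-characteristic case I would make this precise using Artin--Schreier--Witt theory and Kato's refined Swan conductor: the characteristic form of $\GG$ at $\xi_i$ records exactly the logarithmic break $G^{t}_{M_i,\log}$ acting on $\GG_{\overline{\eta_i}}$, and the hypothesis says this break is $\le t$ for every $i$.

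The technical heart is a comparison of Abbes--Saito spaces. To bound the action of $G^{t+}_{K,\log}$ on $V=H^j_c(U_{\overline K},\GG)$, I would first replace $V$ by the finite quotient of $G_K$ through which it factors, and then use the rigid-geometric definition of the logarithmic filtration: the bound is equivalent to the splitting, on a logarithmic tubular neighborhood of radius $t$ of the closed point of $S$, of the finite covering cut out by $V$. Pulling this tube back along $f$ and using proper base change, I would compare it with the logarithmic tubes of the local fields $M_i$ that compute the action of $G^{t+}_{M_i,\log}$ on $\GG_{\overline{\eta_i}}$, with the aim of showing that a radius-$t$ dilatation splitting the $M_i$-coverings forces a radius-$t$ dilatation splitting the covering attached to $V$. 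I expect this comparison to be the main obstacle: one must control the logarithmic dilatation near the nodes of $X_s$ and near the intersections $D_i\cap X_s$, rule out any wild ramification manufactured by the geometry of the fibration rather than by $\GG$ itself, and match the log structure of $S$ against that of $X$ along $D$ so that the two families of radii are genuinely comparable. The equal-characteristic and relative-dimension-one hypotheses enter precisely here, furnishing an explicit semi-stable model in which these tubes and dilatations can be analyzed directly.

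Finally I would assemble the pieces. The comparison transfers the bound $t$ from each $M_i$ to $K$ for the whole complex $Rf_*(j_!\GG)$; since there are only finitely many components $D_i$ and finitely many cohomological degrees, taking the maximum over $i$ and over $j$ yields that $G^{t+}_{K,\log}$ acts trivially on $H^j_c(U_{\overline K},\GG)$ for every $j$. A Hasse--Arf/integrality check in the rank-one situation would confirm that the slopes produced are the expected ones, so that the bound is not merely an inequality of conductors but the sharp statement about the upper-numbering filtration.
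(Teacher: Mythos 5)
Your proposal is a strategy outline rather than a proof, and its central step is precisely the part you leave unproven. The ``technical heart'' you describe --- transferring a radius-$t$ splitting of the Abbes--Saito logarithmic tubes attached to the local fields $M_i$ into a radius-$t$ splitting of the tube attached to the $G_K$-representation $H^j_c(U_{\overline{K}},\GG)$, through the fibration $f\colon X\to S$ --- is essentially a geometric restatement of the conjecture itself. There is no general comparison theorem of this kind to invoke, and you do not construct one; you only say you ``expect this comparison to be the main obstacle.'' Without that comparison nothing in the rest of the argument is supported: the claims that the semi-stable geometry manufactures no wild ramification and that the two families of radii ``are genuinely comparable'' are exactly what has to be proved. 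The concluding step of ``taking the maximum over $i$ and over $j$'' is also empty, since the hypothesis already gives the uniform bound $t$ at every $\xi_i$; there is no quantity left to maximize.

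The paper's actual proof deliberately avoids any direct comparison of Abbes--Saito spaces. It works numerically: a twisting criterion (Proposition \ref{prop:action}) shows that $G^{t+}_{K,\log}$ acts trivially on a representation $M$ provided that for every totally tamely ramified $L/K$ of degree $e$ prime to $p$ and every character $\chi$ with $\mathrm{Sw}_L(\chi)>et$ one has $\mathrm{Sw}_L(M_L\otimes\chi)=\rk(M_L)\,\mathrm{Sw}_L(\chi)$; this identity is then verified for $R\Gamma_c(U_{\overline{K}},\GG)$ via the Kato--Saito conductor formula and the computation $c_{\GG\otimes\tilde\chi}=c_{\tilde\chi}$ (Proposition \ref{prop:tensor}). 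Two ingredients that your sketch never confronts are essential there: the case distinction according to whether $\pi_1(U_{\overline{K}})$ acts trivially on $\GG_{\bar\eta}$, and, in the nontrivial case, the vanishing $H^0_c=H^2_c=0$ (using $\rk\GG=1$), which collapses the alternating sum in the conductor formula to the single group $H^1_c$. Without that collapse a conductor-type or tube-type bound on the whole complex says nothing about the individual cohomology groups. As it stands, your proposal identifies the right objects but does not contain a proof of the statement.
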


Our main result is Theorem \ref{the:main}, in which 
we prove the conjecture in the special case where $\GG$ is of rank $1$, $K$ has characteristic $p$,
and the relative dimension is $d=1$.

The structure of this paper is as follows: in the first section, we shall briefly review some properties of the Abbes-Saito logarithmic upper ramification filtration and some notions on the ramification of characters. In the second section, we give a criterion for $G^{t+}_{K,\,\log}$ to act trivially on an $\ell$-adic sheaf. In the third section, we provide an  application of the Kato-Saito conductor formula. In the fourth section, we present and prove the main result. 

\section{Preliminary notions}

\subsection{The Abbes-Saito filtration}

We shall briefly review some properties of the Abbes-Saito logarithmic upper ramification filtration.
For a complete discrete valuation field $K$, Abbes and Saito constructed  a decreasing filtration  $(G_{K, \log}^t)_{t\in\Q_{>0}}$ of the absolute Galois group $G_K$,  extended by $G^0_{K,\,\log}=G_K$.

When the residue field of $K$ is perfect, $(G_{K, \log}^t)_{t\in\Q_{\geq 0}}$ coincides with the classical upper ramification filtration. $(G_{K,\,\log}^t)_{t\in\Q_{\geq 0}}$ is stable under tame base change; more precisely, if $L$ is a finite separable extension of $K$ of ramification index $e$ that is tamely ramified,   we have $G_{L,\,\log}^{et}=G_{K,\,\log}^t$. In general, for a finite separable extension $L/K$ of ramification index $e$, not necessarily tamely ramified, we have $G_{L,\,\log}^{et}\subset G_{K,\,\log}^t$.

In this paper we shall make use of the following definition:

\begin{definition}
	For a real number $s\geq 0$, define
	$G^{s+}_{K,\,\log} = \overline{\bigcup\limits_{t\in\Q, t>s}G^{t}_{K,\,\log}}$.
\end{definition}

We shall need the following property of this filtration:

\begin{lemma}[\cite{abbes2003ramification}, Lemma 5.2] \label{lemma:52}
	Let $K$ be a complete discrete valuation field with residue field $k$ of characteristic $p$. Assume that there is a map of complete discrete valuation fields $K\to L$ inducing a local homomorphism $\OO_K\to\OO_L$, that the ramification index is prime to $p$, and that the induced extension of residue fields is separable. Then, for $t\in \Q_{>0}$, the map $G_L\to G_K$ induces a surjection $G_{L,\,\log}^{et} \to G_{K,\,\log}^t$. 
\end{lemma}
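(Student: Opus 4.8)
The plan is to reduce to the finite level and then to compare the logarithmic tubular neighborhoods of Abbes--Saito before and after the base change $\OO_K\to\OO_L$; the point is that the hypotheses ($e$ prime to $p$ and $k_L/k$ separable) make this base change log smooth, so that it does not disturb the geometry measuring wild ramification.

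First I would reduce the surjectivity to a statement about finite Galois extensions. By definition $G^t_{K,\log}=\varprojlim_M \mathrm{Gal}(M/K)^t_{\log}$, the limit running over the finite Galois subextensions $M$ of $\overline{K}/K$, and likewise $G^{et}_{L,\log}=\varprojlim_M \mathrm{Gal}(ML/L)^{et}_{\log}$, where for fixed $M$ the extension $ML/L$ is finite, being generated over $L$ by the finitely many generators of $M/K$. Since a homomorphism of profinite groups is surjective as soon as it is surjective onto every finite quotient, it suffices to show, for each $M$, that the injection $\mathrm{Gal}(ML/L)\hookrightarrow \mathrm{Gal}(M/K)$ induced by $G_L\to G_K$ carries $\mathrm{Gal}(ML/L)^{et}_{\log}$ onto $\mathrm{Gal}(M/K)^{t}_{\log}$; surjectivity of the limit map then follows by compactness.

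For a fixed finite Galois $M/K$, recall that $\mathrm{Gal}(M/K)^t_{\log}$ is recovered from the finite $\mathrm{Gal}(M/K)$-set $F^t_{\log}(M/K)$ of geometric connected components of the logarithmic tubular neighborhood of thickening $t$, through the identification $F^t_{\log}(M/K)\cong \mathrm{Gal}(M/K)/\mathrm{Gal}(M/K)^t_{\log}$. The core of the argument is a base-change compatibility: tensoring the data defining this tubular neighborhood with $\OO_L$ and renormalizing the thickening --- the valuation of $L$ being $e$ times that of $K$, and $\pi_K$ being a unit times $\pi_L^{e}$ --- produces precisely the logarithmic tubular neighborhood computing $F^{et}_{\log}(ML/L)$. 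Because $e$ is prime to $p$ and $k_L/k$ is separable, this base change is log smooth, so it induces a $G$-equivariant bijection $F^{et}_{\log}(ML/L)\xrightarrow{\ \sim\ }F^t_{\log}(M/K)$, compatible with $\mathrm{Gal}(ML/L)\hookrightarrow\mathrm{Gal}(M/K)$. Reading the ramification subgroups off these two $\pi_0$-sets yields the required surjectivity at finite level.

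The main obstacle I expect lies in this last comparison of tubular neighborhoods: one must track the logarithmic structures through the base change and verify that the rescaling of the thickening by the factor $e$ is exactly compensated by the change of normalized valuation, and, crucially, that no new wild ramification is created. This is precisely where the hypotheses enter: $\gcd(e,p)=1$ together with the separability of $k_L/k$ guarantees the log smoothness of $\OO_K\to\OO_L$, which upgrades the comparison of component sets from a mere surjection onto a quotient to a bijection, and hence yields surjectivity rather than only the inclusion $G^{et}_{L,\log}\subseteq G^t_{K,\log}$.
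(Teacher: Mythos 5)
The paper does not actually prove this lemma; it is imported verbatim from Abbes--Saito (\cite{abbes2003ramification}, Lemma 5.2), so there is no internal proof to measure yours against. Your outline does follow the general lines of the Abbes--Saito argument (reduction to finite Galois quotients, then a base-change comparison of logarithmic tubular neighborhoods), but it has a genuine gap at the step that carries all the weight. The finite-level claim you formulate --- a $\mathrm{Gal}(ML/L)$-equivariant bijection $F^{et}_{\log}(ML/L)\xrightarrow{\ \sim\ }F^{t}_{\log}(M/K)$ --- is false whenever $M\cap L\neq K$. Take $e$ prime to $p$ with $\mu_e\subseteq K$, a uniformizer $\pi$ of $K$, and $L=M=K(\pi^{1/e})$: then $ML=L$ and the left-hand side is a single point for every $t$, whereas $M/K$ is tame, so $\mathrm{Gal}(M/K)^{t}_{\log}=1$ and, by your own identification, $F^{t}_{\log}(M/K)\cong\mathrm{Gal}(M/K)$ has $e$ elements for all $t>0$. (The lemma's conclusion still holds there, trivially; it is the intermediate bijection that fails.) The correct comparison is with the finite \'etale $L$-algebra $M\otimes_K L$, whose set of embeddings into $\overline{L}$ is canonically $\mathrm{Hom}_K(M,\overline{K})$; one must then separately descend from the $\mathrm{Gal}(M/K)$-set of components of that algebra's tubular neighborhood to the subgroup $\mathrm{Gal}(ML/L)$ acting on a chosen component.

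Even granting that repair, the decisive assertion --- that the base change $\OO_K\to\OO_L$, with thickening rescaled by $e$, induces a bijection on geometric connected components of the logarithmic tubular neighborhoods --- is precisely the tame base change theorem of Abbes--Saito, i.e.\ essentially the entire content of the lemma, and ``this base change is log smooth'' is a heuristic rather than a proof of it. (Note also that in the intended application $L=M_i$ has residue field $k_i$ not finitely generated over $k$, so $\OO_K\to\OO_L$ is not of finite type and log smoothness can at best be meant formally or after a limit argument.) Your profinite reduction by compactness is fine, modulo the small slip that $G^{et}_{L,\log}$ is the limit over \emph{all} finite Galois extensions of $L$, not only those of the form $ML$ --- harmless here, since surjectivity onto each $\mathrm{Gal}(ML/L)^{et}_{\log}$ is all you use. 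In short: the skeleton matches the source's, but the proposal asserts, rather than proves, the point it correctly identifies as the crux.
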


As a consequence, we also have surjections $G_{L,\,\log}^{et+} \to G_{K,\,\log}^{t+}$.

\subsection{Ramification of characters}

In this subsection, assume that the residue field $k$ of $K$ has characteristic $p>0$ and is not necessarily perfect.

We recall the definition of the $k$-vector space $\Omega_k(\log)$.  There exists a canonical map $d\log:K^\times\to \Omega_k$, and  $\Omega_k(\log)$  is the amalgamate sum of the differential module $\Omega_k$ 
with $k\otimes_{\Z}K^\times$ over  $k\otimes_{\Z}\OO_K^\times$ with respect to $d\log:\OO_K^\times\to \Omega_k$ and $\OO_K^\times\hookrightarrow K^\times$. There is a residue map $\res:\Omega_k(\log)\to k$ induced by the valuation map of $K$ and an exact sequence

\begin{center}
	\begin{tikzcd}
		0 \arrow{r} & \Omega_k \arrow{r} & \Omega_k(\log) \arrow{r}{\res} & k\arrow{r} & 0.
	\end{tikzcd}
\end{center}

In \cite{kato1989swan}, Kato constructs an increasing filtration $(F_r H^1(K, \Q/\Z))_{r\in \N}$ and defines, putting $\text{Gr}_{r} H^1(K, \Q/\Z)=F_{r} H^1(K, \Q/\Z)/F_{r-1} H^1(K, \Q/\Z)$ for $r\geq1$, an injection 
\[
\rsw_{r,K}: \text{Gr}_{r} H^1(K, \Q/\Z)\to \Hom_k(\mm_K^{r}/\mm_K^{r+1},\Omega_k(\log)),
\]
where $\mm_K$ denotes the maximal ideal of $\OO_K$.  For $\chi \in F_{r} H^1(K, \Q/\Z)\backslash F_{r-1} H^1(K, \Q/\Z)$, the injection 
\[
\rsw_{r,K}(\chi):\mm_K^{r}/\mm_K^{r+1} \to \Omega_k(\log)
\]
is denoted by $\rsw_K(\chi)$ and called the refined Swan conductor of $\chi$. 

In \cite{abbes2009analyse}, Corollary 9.12, Abbes and Saito relate Kato's construction to the upper ramification groups defined in \cite{abbessaito}. More specifically, they prove that, when $K$ is of equal characteristic, 
$\chi \in F_r H^1(K, \Q/\Z)$ if and only if $\chi$ kills $G^{r+}_{K,\,\log}$.

\begin{remark}
	As the referee pointed out, the comparison between Kato's filtration and the Abbes-Saito logarithmic upper ramification groups remains open    in the mixed characteristic case. This is the only reason we assume that $K$ is of characteristic $p>0$ in sections 3 and 5 of this paper. All results of this paper are valid whenever $K$ is a complete discrete valuation field   having the property that, for all $\chi \in  H^1(K, \Q/\Z)$ and $r\in\N$,
	\[
	\chi \in F_r H^1(K, \Q/\Z) \text{ if and only if }  \chi:G_K\to \Q/\Z  \text{ kills } G^{r+}_{K,\,\log}.
	\]
\end{remark} 

Consider now the following case. Let $S=\Spec \OO_K$ and %$K$ be of mixed characteristic and
 $X$ 
be a regular flat separated scheme over $S$. Let $D=\bigcup\limits_{i=1}^n D_i$ be a divisor with simple normal crossings, where $D_i$ denotes the irreducible components of $D$.  For each $i$ let $\xi_i$ be a generic point for $D_i$, $\OO_{M_i}= \OO_{X, \, \xi_i}^h$ the henselization of the local ring at $\xi_i$,  $M_i$ its field of fractions, and $k_i$ the residue field of $M_i$. Let $U=X-D$ and $\chi \in H^1(U, \Q/\Z)$. For each $i$, denote by $\chi_i\in  H^1(M_i, \Q/\Z)$ the restriction of $\chi$, and by $r_i$ the Swan conductor $\mathrm{Sw}_{M_i} \chi_i$. Define the Swan divisor 
\[
D_\chi = \sum_i r_i D_i 
\]  
and let 
\[
E= \sum_{r_i>0}  D_i 
\]
be the support of $D_\chi$. It's shown by \cite{kato1989swan}, (7.3), that there exists an injection 
\[
\rsw \chi: \OO_X(-D_\chi)\otimes_{\OO_X} \OO_E\to \Omega^1_{X/S} (\log D)\otimes_{\OO_X} \OO_E       
\]
inducing $\rsw_{M_i}(\chi_i)$ at $\xi_i$. 
We say that $\chi$ is clean if $\rsw \chi$ is a locally splitting injection.

\subsection{Semi-stable pairs}

In this subsection, we let $K$ be a complete discrete valuation field with perfect residue field $k$ of characteristic $p>0$,
$X$  a  proper scheme of finite presentation over $\OO_K$, 
and $U$  an open and dense subscheme of $X$. We recall the definition of a semi-stable pair (\cite{saito2004log}, Definition 1.6):

\begin{definition}\label{def:semi}
	The pair $(X,U)$ is said to be semi-stable over $\OO_K$ of relative dimension $d$ if, \'{e}tale locally on $X$, X is \'{e}tale over $\Spec \OO_K[T_0,\ldots,T_d]/(T_0\cdots T_r-\pi)$ and $U$ is the inverse image of $\Spec \OO_K[T_0,\ldots,T_d,T_0^{-1},\ldots, T_m^{-1}]/(T_0\cdots T_r-\pi)$ for some $0\leq r\leq m\leq d$ and prime $\pi$ of $K$. 
	
	When $(X,X_K)$ is semi-stable over $\OO_K$, we say that $X$ is semi-stable over $\OO_K$. 
	
	If we substitute the condition ``\'{e}tale locally'' by ``Zariski locally'', the pair $(X,U)$ is then said to be strictly semi-stable.  
\end{definition}

We shall need the following property of semi-stable pairs, which is a consequence of Theorem 2.9 in \cite{saito2004log}:
\begin{theorem}\label{theo:semi}
	Let $(X,U)$ be a strictly semi-stable pair over $\OO_K$ and $L$ be a finite separable extension of $K$. Then there exists a proper birational morphism $X'\to X_{\OO_L}$ inducing an isomorphism $U'\to U_{\OO_L}$, where $U'$ is the inverse image of $U_{\OO_L}$, and such that $(X', U')$ form a strictly semi-stable pair over $\OO_L$.
\end{theorem}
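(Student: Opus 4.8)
The plan is to obtain this as a corollary of the log-geometric theory of semi-stable reduction, the decisive input being Theorem 2.9 of \cite{saito2004log}. First I would translate strict semi-stability into the language of log structures. Equip $X$ with the log structure $\mathcal{M}_X$ attached to the divisor $D=X\setminus U$ and $S=\Spec\OO_K$ with its canonical log structure $\mathcal{M}_S$, generated by a prime element $\pi$. The local description in Definition \ref{def:semi} says precisely that $(X,\mathcal{M}_X)\to(S,\mathcal{M}_S)$ is log smooth and admits, Zariski-locally, a chart by the diagonal $\N\to\N^{r+1}$, $1\mapsto(1,\dots,1)$, together with the free ``horizontal'' factor coming from $T_{r+1},\dots,T_m$. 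In this picture $U$ is exactly the open locus where $\mathcal{M}_X$ is trivial, and it is contained in the generic fibre $X_K$.

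Next I would base change to $\OO_L$. Writing $e$ for the ramification index of $L/K$ and $\varpi$ for a prime of $L$, the morphism $(S_L,\mathcal{M}_{S_L})\to(S,\mathcal{M}_S)$, where $S_L=\Spec\OO_L$, is given on log points by the monoid map $\N\xrightarrow{\ \times e\ }\N$. On the standard charts the base change replaces the equation $T_0\cdots T_r-\pi$ by $T_0\cdots T_r-(\mathrm{unit})\,\varpi^{e}$, so $X_{\OO_L}=X\times_{\OO_K}\OO_L$ is log smooth over $(S_L,\mathcal{M}_{S_L})$ but is no longer semi-stable: it acquires toric singularities along the closed fibre (for $d=1$, $r=1$ these are the $A_{e-1}$-singularities $T_0T_1=\varpi^{e}$). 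Combinatorially, the vertical cone is $\sigma=\mathbb{R}_{\ge0}^{r+1}$ with lattice $\Z^{r+1}$, and the obstruction is that the ``sum'' map $\sigma\to\mathbb{R}_{\ge0}$ to the value cone of $L$, which over $K$ carried each standard generator to the prime, now fails to exhibit $\sigma$ as a unimodular cone relative to the value monoid of $\OO_L$.

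I would then resolve. The fan of $X_{\OO_L}$ admits a subdivision (after, if necessary, passing to the finite saturated log base change, which is an isomorphism over the generic fibre and hence over $U_{\OO_L}$) rendering every maximal cone unimodular and compatible with the value map of $\OO_L$; this is exactly the toroidal, KKMS-type resolution furnished by Theorem 2.9 of \cite{saito2004log}. It produces a proper log blow-up $X'\to X_{\OO_L}$ which is an isomorphism over the locus where the log structure is trivial and whose source $X'$ is strictly semi-stable over $\OO_L$, of the same relative dimension $d$. Since $U_{\OO_L}$ lies in the log-trivial locus, the composite $X'\to X_{\OO_L}$ is proper and birational and restricts to an isomorphism there; hence $U'$, the inverse image of $U_{\OO_L}$, maps isomorphically onto $U_{\OO_L}$, and $(X',U')$ is the required strictly semi-stable pair over $\OO_L$.

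The substance of the argument is carried by \cite{saito2004log}, so the only points that require care in this deduction are (i) checking that $X_{\OO_L}$ (with its base-changed log structure) satisfies the hypotheses of Theorem 2.9, and (ii) ensuring that the resolution is \emph{strictly} semi-stable --- the irreducible components of the closed fibre regular and meeting transversally Zariski-locally, not merely étale-locally --- and that the chosen subdivision touches neither $U_{\OO_L}$ nor the horizontal part of $D$. This last requirement, that the modification be supported away from $U_{\OO_L}$, is what guarantees the crucial isomorphism $U'\cong U_{\OO_L}$ needed in the applications; the combinatorial resolution itself is then routine.
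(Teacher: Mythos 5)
Your proposal rests on exactly the same decisive input as the paper, which offers no argument beyond the remark that the statement ``is a consequence of Theorem 2.9 in \cite{saito2004log}''; your log-geometric/toroidal unpacking --- base change producing toric singularities along the closed fibre, resolved by a subdivision supported away from the log-trivial locus containing $U_{\OO_L}$ --- is a correct and more detailed account of that same deduction. Nothing further is needed.
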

\section{\texorpdfstring{The action of  $G^{t+}_{K, \, \log}$}{The action of G^{t+}_{K, log}}}

In this section, we let $K$ be a complete discrete valuation field of equal characteristic with perfect residue field $k$ of characteristic $p>0$, $\ell$ be a prime different than $p$, and 
$M, N$ be finite-dimensional representations of $G_K$ over $\overline{\Q_\ell}$ which come from  finite-dimensional continuous representations of $G_K$ over a finite extension of $\Q_\ell$ contained in $\overline{\Q_\ell}$. %-representations of $G_K$ on which the wild inertia subgroup of $G_K$ acts through a finite discrete quotient. 
We shall provide a criterion for $G^{t+}_{K,\,\log}$ to act trivially on $M$.

There is   a canonical slope decomposition (see \cite{katz1988gauss}, Proposition 1.1, or \cite{abbes2011ramification}, Lemma 6.4) 
\[
M= \bigoplus_{r\in\Q_{\geq0}} M^{(r)}
\]
characterized by the following properties: if $P$ is the wild inertia subgroup of $G_K$, then $M^P=M^{(0)}$. Further, for all $r>0$, 
\[
(M^{(r)})^{G_{K,\,\log}^r} = 0
\]
and
\[
(M^{(r)})^{G_{K,\,\log}^{r+}} = M^{(r)}.
\]
We have $M^{(r)}=0$ except for finitely many $r$. The values of $r$ for which 
$M^{(r)}\neq 0$ are called slopes of $M$. 
\begin{definition}
	We say that M is isoclinic if it has only one slope.
\end{definition}

The following proposition gives our criterion:
\begin{proposition}\label{prop:action}Let $t$ be a nonnegative real number.
	Assume that, for any  totally tamely ramified extension $L/K$ of degree $e$ prime to $p$, we have the following: if $M_L$ denotes the  representation of $G_L$ induced by $M$, then, for any character $\chi:G_L\to \overline{\Q_\ell}^\times$ for which $\mathrm{Sw}_{L}(\chi)>et$, we have
	\[
	\mathrm{Sw}_{L}(M_L\otimes\chi)=\rk(M_L)\mathrm{Sw}_L(\chi).
	\]
	Then ${G_K^{t+}}$ acts trivially on $M$.
\end{proposition}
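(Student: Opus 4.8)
The plan is to reduce the statement to a bound on the slopes of $M$ and then to detect a hypothetical large slope by twisting with a carefully chosen character. First I would observe that $G^{t+}_{K,\,\log}$ acts trivially on $M$ whenever every slope $r$ of $M$ satisfies $r \le t$: writing $M = \bigoplus_r M^{(r)}$ and recalling that the filtration is decreasing, for $0 < r \le t$ we have $G^{t+}_{K,\,\log} \subseteq G^{r+}_{K,\,\log}$, which acts trivially on $M^{(r)}$ since $(M^{(r)})^{G^{r+}_{K,\,\log}} = M^{(r)}$; and for $r = 0$ one uses $M^{(0)} = M^P$ together with the inclusion $G^{t+}_{K,\,\log} \subseteq P$ into the wild inertia. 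Thus it suffices to show that the hypothesis forces every slope of $M$ to be at most $t$.

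I would prove this by contradiction. Suppose the largest slope $r_0$ of $M$ satisfies $r_0 > t$. For a totally tamely ramified extension $L/K$ of degree $e$ prime to $p$, the identity $G^{es}_{L,\,\log} = G^{s}_{K,\,\log}$ shows that passing to $M_L = M|_{G_L}$ multiplies every slope by $e$; in particular the slopes of $M_L$ are $\{es : s \text{ a slope of } M\}$, a finite set with largest element $er_0$. Since $r_0 > t$, the open interval $(et, er_0)$ has length $e(r_0 - t)$, which tends to infinity as $e$ ranges over integers prime to $p$. Hence for $e$ large I can pick an integer $c$ with $et < c < er_0$, with $p \nmid c$, and with $c$ not equal to any slope of $M_L$. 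Over the equal-characteristic field $L$ with perfect residue field, Artin--Schreier theory yields a character $\chi \colon G_L \to \overline{\Q_\ell}^\times$ with $\mathrm{Sw}_L(\chi) = c$ (every Swan conductor prime to $p$ is realized); as $c > et$, this $\chi$ is admissible in the hypothesis.

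It then remains to compute the twisted Swan conductor and reach a contradiction. Using the slope decomposition $M_L = \bigoplus_s (M_L)^{(s)}$ and the ultrametric behavior of breaks (cf. \cite{katz1988gauss}) --- for $V$ isoclinic of slope $s \ne c$, the twist $V \otimes \chi$ is isoclinic of slope $\max(s,c)$ --- together with the equality $\mathrm{Sw} = (\dim)\cdot(\text{slope})$ on isoclinic pieces, I obtain
\[
\mathrm{Sw}_L(M_L \otimes \chi) = \sum_s \dim (M_L)^{(s)}\,\max(s,c) = c\!\!\sum_{s < c}\!\dim (M_L)^{(s)} + \sum_{s > c}\dim (M_L)^{(s)}\, s.
\]
Because $er_0 > c$ occurs among the slopes, the second sum strictly exceeds $c \sum_{s>c}\dim (M_L)^{(s)}$, so $\mathrm{Sw}_L(M_L \otimes \chi) > c\,\rk(M_L) = \rk(M_L)\,\mathrm{Sw}_L(\chi)$, contradicting the assumed equality. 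This forces $r_0 \le t$ and finishes the proof. The step I expect to be the main obstacle is securing the character $\chi$: one must place its Swan conductor strictly between $et$ and the top slope while dodging the finitely many slopes of $M_L$, and this is precisely what the freedom to enlarge the tame degree $e$ --- built into the hypothesis --- makes possible, by widening the interval $(et, er_0)$ until it contains a suitable integer prime to $p$.
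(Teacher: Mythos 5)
Your proposal is correct and follows essentially the same route as the paper: reduce to bounding the slopes of $M$ by $t$, pass to a tame extension of large degree $e$ to create an integer Swan conductor in the gap, twist by a character with that conductor, and derive a contradiction from the isoclinic behavior of tensor products (the paper's Lemma \ref{lemma:tensoriso}, your ``$\max(s,c)$'' rule). The only differences are cosmetic --- the paper places its auxiliary conductor $m$ above all slopes of $M_L$ except the top one, whereas you merely avoid the finitely many slopes, and you additionally spell out the Artin--Schreier construction of $\chi$, which the paper leaves implicit.
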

The proof will be presented shortly.  The general strategy is the following:
\begin{itemize}
	\item[\textendash] We first show that the behavior of the tensor product of isoclinic $M$ and $N$ is similar to that of the tensor product of characters;
	\item[\textendash] Next, we use the previous result to understand the slope decomposition of the tensor product $M\otimes\chi$ and prove the proposition.
\end{itemize}

We start with the lemma:

\begin{lemma}\label{lemma:tensoriso}
	If $M$ is isoclinic of slope $r$ and $N$ is isoclinic of slope $s$, where $r>s$, then $M\otimes N$ is isoclinic of slope $r$. 
\end{lemma}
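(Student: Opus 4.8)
The plan is to verify directly the two conditions that characterize the slope-$r$ component, namely
$(M\otimes N)^{G^{r}_{K,\,\log}}=0$ and $(M\otimes N)^{G^{r+}_{K,\,\log}}=M\otimes N$. By the characterization of the slope decomposition recalled above, these two equalities together force $M\otimes N=(M\otimes N)^{(r)}$: the first kills every slope component of index $<r$ (including the slope-$0$ part $(M\otimes N)^{P}$), and the second kills every component of index $>r$, so the only surviving slope is $r$. Since $M,N\neq 0$ we have $M\otimes N\neq 0$, and thus $M\otimes N$ is isoclinic of slope $r$. The whole argument rests on a few elementary inclusions among the ramification groups, all coming from the fact that the filtration is decreasing in its index. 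As $r,s$ are rational with $r>s$, the index $r$ lies in the range $\{t\in\Q:t>s\}$, so $G^{r}_{K,\,\log}\subseteq G^{s+}_{K,\,\log}$, and likewise $G^{r+}_{K,\,\log}\subseteq G^{s+}_{K,\,\log}$; in the degenerate case $s=0$ one uses instead the identification of $G^{0+}_{K,\,\log}$ with the wild inertia subgroup $P$, which gives $G^{r}_{K,\,\log},G^{r+}_{K,\,\log}\subseteq P$ for every $r>0$.

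First I would record that both $G^{r}_{K,\,\log}$ and $G^{r+}_{K,\,\log}$ act trivially on $N$. Indeed, by hypothesis $N=N^{(s)}$, so that $N^{G^{s+}_{K,\,\log}}=N$ when $s>0$ and $N^{P}=N$ when $s=0$; in either case the inclusions of the previous paragraph place $G^{r}_{K,\,\log}$ and $G^{r+}_{K,\,\log}$ inside the subgroup fixing $N$ pointwise. On the other side, $M=M^{(r)}$ yields $M^{G^{r+}_{K,\,\log}}=M$ and $M^{G^{r}_{K,\,\log}}=0$. With these facts in hand the two conditions are immediate. Since $G^{r+}_{K,\,\log}$ fixes $M$ pointwise and acts trivially on $N$, it acts trivially on $M\otimes N$, whence $(M\otimes N)^{G^{r+}_{K,\,\log}}=M\otimes N$. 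For the remaining condition, because $G^{r}_{K,\,\log}$ acts trivially on $N$, choosing a $\overline{\Q_\ell}$-basis of $N$ identifies $M\otimes N$ with a finite direct sum of copies of $M$ as a $G^{r}_{K,\,\log}$-representation, so that $(M\otimes N)^{G^{r}_{K,\,\log}}=M^{G^{r}_{K,\,\log}}\otimes N=0$.

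I do not anticipate a serious obstacle here, as the computation is formal once the slope decomposition is taken as given. The only points requiring genuine care are the bookkeeping of the ramification-group inclusions — in particular the separate treatment of the slope-zero case $s=0$ through the identification $G^{0+}_{K,\,\log}=P$ — and the observation that the pair of equalities $(M\otimes N)^{G^{r}_{K,\,\log}}=0$ and $(M\otimes N)^{G^{r+}_{K,\,\log}}=M\otimes N$ is precisely what is needed to pin down $M\otimes N$ as the isoclinic slope-$r$ component, rather than merely exhibiting $r$ as one of its slopes.
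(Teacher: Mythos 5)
Your proof is correct and follows essentially the same route as the paper: verify that $G^{r}_{K,\,\log}$ has no invariants and $G^{r+}_{K,\,\log}$ acts trivially on $M\otimes N$, using that both groups lie in the subgroup fixing $N$ because $r>s$. Your write-up is simply more explicit about the ramification-group inclusions, the $s=0$ case, and why these two conditions characterize the isoclinic slope-$r$ component.
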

\begin{proof}
	We have
	\[
	M^{G_K^r} = 0,
	\]
	\[
	M^{G_K^{r+}} = M,
	\]
	\[
	N^{G_K^s} = 0,
	\]
	and
	\[
	N^{G_K^{s+}} = N.
	\]
	Since $r>s$, $(M\otimes N)^{G_K^{r+}} = M\otimes N$. On the other hand, $G_K^{r}$ acts trivially on $N$ and $M^{G_K^r} = 0$, so $(M\otimes N)^{G_K^r} = 0$. Hence $M\otimes N$ is isoclinic of slope $r$.
\end{proof}

\begin{proof}[Proof of Proposition \ref{prop:action}]
	We need to show that, if $r>t$, then $M^{(r)}=0$. Let $R$ be the maximum slope of $M$.  Assume, by contradiction, that $R>t$. Let $m, e$ be positive integers such that:
	\begin{enumerate}[(i)]
		\item $e$ is prime to $p$,
		\item $\frac{m}{e}<R$,
		\item $\frac{m}{e}$ is strictly greater than any other slope of $M$,
		\item $\frac{m}{e}>t$. 
	\end{enumerate}
	
	Let $L$ be a  totally tamely ramified extension of degree $e$ of $K$. By \cite{abbessaito}, Proposition 3.15, $G_{K,\,\log}^s=G_{L,\,\log}^{es}$ for any $s\in\Q_{\geq 0}$, so  
	the slopes of $M_L$ are of the form $er$, where $r$ is a slope of $M$.
	Take $\chi$ with $\mathrm{Sw}_L(\chi)=m$. Then, by assumption,
	\[
	\mathrm{Sw}_{L}(M_L\otimes\chi)=\rk(M_L)\mathrm{Sw}_L(\chi)=\rk(M_L)m.
	\]
	By Lemma \ref{lemma:tensoriso}, for all $r<m$ we have that $M_L^{(r)}\otimes \chi$ is isoclinic of slope $m$, while $M_L^{(eR)}\otimes \chi$ is isoclinic of slope $eR$. It follows that  
	\[
	\mathrm{Sw}_{L}(M_L\otimes\chi)= \sum_{r\in\Q_{\geq 0}}\mathrm{Sw}_{L}(M_L^{(r)}\otimes\chi)= \sum_{r\in\Q_{\geq 0}, r<m} \rk(M_L^{(r)}) m +  \rk (M_L^{(eR)})eR.
	\]
	Combining the two expressions we get
	\[
	\rk (M_L^{(eR)})eR =  \rk (M_L^{(eR)})m,
	\]
	which is a contradiction, since, by assumption, $m<eR$ and $M_L^{(eR)}\neq 0$. 
\end{proof}

\section{The Kato-Saito conductor formula}

Let $K$ be a complete discrete valuation field with perfect residue field $k$ of characteristic $p>0$. Let $\ell$ be a prime number different from $p$, $U$ be a smooth separated scheme of finite type over $K$, and $\FF$ be a smooth $\ell$-adic sheaf of constant rank on $U$. In \cite{kato2013ramification},  Kato and  Saito defined the Swan class $\mathrm{Sw}_U\FF$, a $0$-cycle class with coefficients in $\Q$ supported on the special fiber of a compactification of $U$ over $\OO_K$, and proved the conductor formula
\[
\mathrm{Sw}_{K} R\Gamma_c(U_{\overline{K}},\FF) = \deg \mathrm{Sw}_U\FF + \rk(\FF) \mathrm{Sw}_{K} R\Gamma_c(U_{\overline{K}},\overline{\Q_\ell}),
\]   
where $\mathrm{Sw}_{K} R\Gamma_c(U_{\overline{K}},\FF)$ denotes the alternating sum  
$\sum\limits_j (-1)^j \mathrm{Sw}_{K} H^j_c(U_{\overline{K}}, \FF)$.

In this section, assume that 
$X$ is a regular flat separated scheme of finite type over $S=\Spec \OO_K$. Let $D\subset X$ be a divisor with simple normal crossings and write $D=\bigcup\limits_{i=1}^n D_i$, where $D_i$ are the irreducible components of $D$.    Put $U=X-D$ and consider a  smooth $\ell$-adic sheaf $\FF$ of  rank $1$ on $U$, at most tamely ramified on $X_K$ and with clean ramification with respect to $X$. 

The Swan $0$-cycle class $c_\FF$ of $\FF$ is  defined as follows. Let $E$ be the support of the Swan divisor $D_\FF= \sum r_i D_i$. Then define $c_\FF\in CH_0(E)$ as
\[
c_\FF = \{ c(\Omega^1_{X/S} (\log D) \otimes_{\OO_X} \OO_E)^* \cap (1+D_\FF)^{-1}\cap D_\FF \}_{\dim 0}.
\]

Under the assumption that $\dim U_K\leq 1$,  by Corollary 8.3.8 of \cite{kato2013ramification}, the Kato-Saito conductor formula becomes simply 
\[
\mathrm{Sw}_{K} R\Gamma_c(U_{\overline{K}},\FF) = \deg c_\FF + \mathrm{Sw}_{K} R\Gamma_c(U_{\overline{K}},\overline{\Q_\ell}).
\]

The following proposition is an application of this formula that will be useful in the next section:

\begin{proposition}\label{prop:tensor}
	Let $X$, $S$ and $U=X-D$ be as above. Let $\FF_1$ and $\FF_2$ be two 
	smooth $\ell$-adic sheaves on $U$ of rank one, $\FF_2$ having clean ramification with respect to $X$. 
	Write $D_{\FF_1}= \sum r_i D_i$ and $D_{\FF_2}=\sum s_i D_i$. Assume that $r_i<s_i$ for every $i$. Then $\FF_1\otimes\FF_2$ has clean ramification and
	\[
	c_{\FF_1\otimes\FF_2}= c_{\FF_2}.
	\]
\end{proposition}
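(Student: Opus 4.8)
The plan is to reduce everything to the local refined Swan conductors at the generic points $\xi_i$ of the $D_i$, using that the tensor product of rank-one sheaves corresponds to the sum of the associated characters. Let $\chi_1,\chi_2\in H^1(U,\Q/\Z)$ be the characters attached to $\FF_1,\FF_2$, so that $\FF_1\otimes\FF_2$ corresponds to $\chi_1+\chi_2$, and write $\chi_{1,i},\chi_{2,i}$ for their restrictions to $M_i$. Since $\mathrm{Sw}_{M_i}(\chi_{1,i})=r_i<s_i=\mathrm{Sw}_{M_i}(\chi_{2,i})$ are integers, we have $\chi_{1,i}\in F_{r_i}H^1(M_i,\Q/\Z)\subseteq F_{s_i-1}H^1(M_i,\Q/\Z)$, so $\chi_{1,i}+\chi_{2,i}\equiv\chi_{2,i}\pmod{F_{s_i-1}}$. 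In particular $\chi_{1,i}+\chi_{2,i}\notin F_{s_i-1}$, whence $\mathrm{Sw}_{M_i}(\chi_{1,i}+\chi_{2,i})=s_i$, and the two characters have the same image in $\mathrm{Gr}_{s_i}H^1(M_i,\Q/\Z)$, so $\rsw_{M_i}(\chi_{1,i}+\chi_{2,i})=\rsw_{M_i}(\chi_{2,i})$. Summing over $i$ gives the equality of Swan divisors $D_{\FF_1\otimes\FF_2}=\sum_i s_i D_i=D_{\FF_2}$; note that $r_i\ge 0$ together with $r_i<s_i$ forces $s_i>0$, so the supports $E$ of the two divisors coincide (and equal $D$).

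Next I would prove that $\FF_1\otimes\FF_2$ is clean and then read off the equality of $0$-cycle classes. Both $\rsw(\chi_1+\chi_2)$ and $\rsw(\chi_2)$ are maps $\OO_X(-D_{\FF_2})\otimes_{\OO_X}\OO_E\to\Omega^1_{X/S}(\log D)\otimes_{\OO_X}\OO_E$, and by the computation above they induce the same $\rsw_{M_i}(\chi_{2,i})$ at each generic point $\xi_i$ of $E$. Since Kato's global refined Swan conductor of \cite{kato1989swan}, (7.3), is the map inducing the prescribed local refined Swan conductors at the $\xi_i$, the two maps agree; as $\FF_2$ is clean, $\rsw(\chi_2)$ is a locally splitting injection, hence so is $\rsw(\chi_1+\chi_2)$, which is exactly the cleanness of $\FF_1\otimes\FF_2$. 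The equality $c_{\FF_1\otimes\FF_2}=c_{\FF_2}$ is then immediate from the defining formula for the Swan $0$-cycle class, which involves only $\Omega^1_{X/S}(\log D)$, the support $E$, and the Swan divisor $D_\FF$: all three agree for $\FF_1\otimes\FF_2$ and $\FF_2$.

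The step I expect to be the main obstacle is the passage from the pointwise identity $\rsw_{M_i}(\chi_{1,i}+\chi_{2,i})=\rsw_{M_i}(\chi_{2,i})$ at the $\xi_i$ to the equality of the global refined Swan conductor maps, and thereby to cleanness of the tensor product. One must verify that Kato's global map is genuinely determined by the local refined Swan conductors at the generic points of $E$, and that the torsion of $\Omega^1_{X/S}(\log D)\otimes_{\OO_X}\OO_E$ along the crossings of $D$ does not obstruct the conclusion that a locally splitting injection remains one. This is where the precise form of the construction in (7.3) of \cite{kato1989swan} has to be invoked with care.
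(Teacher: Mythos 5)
Your proof is correct and follows essentially the same route as the paper's: the paper likewise observes that $r_i<s_i$ forces $D_{\FF_1\otimes\FF_2}=D_{\FF_2}$ and that the refined Swan conductors of $\FF_1\otimes\FF_2$ and $\FF_2$ coincide, and then reads off $c_{\FF_1\otimes\FF_2}=c_{\FF_2}$ from the defining formula. You simply supply more detail (the computation in $\mathrm{Gr}_{s_i}H^1(M_i,\Q/\Z)$ and the passage from generic points to the global map, which works since $E$ is reduced and the sheaves involved are locally free on $E$) than the paper, which asserts these points without elaboration.
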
 
\begin{proof} 
	Since $r_i< s_i$ for every $i$, we have $D_{\FF_1\otimes\FF_2}=D_{\FF_2}$ and the refined Swan conductors of  $\FF_1\otimes\FF_2$ and $\FF_2$ coincide. Denote by $E_i$ the support of $D_{\FF_i}$ and by $E$ be the support of $D_{\FF_1\otimes\FF_2}$. We have $E=E_2$, so
	\begin{align*}
	c_{\FF_1\otimes\FF_2}&= \{ c(\Omega^1_{X/S} (\log D) \otimes_{\OO_X} \OO_E)^* \cap (1+D_{\FF_1\otimes\FF_2})^{-1}\cap D_{\FF_1\otimes\FF_2}\}_{\dim 0}
	\\ 
	&=\{ c(\Omega^1_{X/S} (\log D) \otimes_{\OO_X} \OO_{E_2})^* \cap (1+D_{\FF_2})^{-1}\cap D_{\FF_2}\}_{\dim 0}\\
	&= c_{\FF_2}.
	\qedhere\end{align*}
\end{proof}

\section{Main results}
In this section, we let $K$ be a complete discrete valuation field with perfect residue field $k$ of characteristic $p>0$ and of equal characteristic, % $\eta=\Spec K$, 
$S=\Spec \OO_K$, and $s=\Spec k$.
We will denote by $X$ a  proper, connected scheme of finite presentation over $\OO_K$, 
and $U$  an open and dense subscheme of $X$. We assume that $D=X-U$ is a  
divisor with simple normal crossings and write 
$\bigcup\limits_{i=1}^n D_i$, where $D_i$ are the irreducible components. We also assume  that the pair $(X,U)$ is strictly semi-stable over $\OO_K$ of relative dimension $1$, and that $\GG$ is a smooth $\ell$-adic sheaf on $U$, where  $\ell$ 	is  a prime number different from $p$.  Further, we assume that $\GG$ is of rank $1$ and at most tamely ramified on the generic fiber $X_K$.
%, and has clean ramification. 
Denote by $\xi_i$  the generic point of $D_i$, $\OO_{M_i}=\OO^h_{X,\xi_i}$ the henselization of the local ring at $\xi_i$,  $M_i$ its field of fractions, $k_i$ the residue field of $M_i$, and $\eta_i=\Spec M_i$. 

We shall prove the following theorem:
\begin{theorem} \label{the:main}
	Conjecture \ref{conjecture} is true when $\GG$ is of rank $1$, the relative dimension is $1$, and $K$ is of equal characteristic. 
\end{theorem}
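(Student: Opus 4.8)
The plan is to deduce the statement from the criterion of Proposition~\ref{prop:action}, applied to the individual cohomology groups $H^j_c(U_{\overline{K}},\GG)$, by computing the Swan conductor of a suitable twist through two applications of the Kato--Saito conductor formula. Two preliminary reductions are in order. Since $\GG$ has rank one, $\GG_{\overline{\eta_i}}$ is a character $\chi_i$ of $G_{M_i}$, and by the comparison between Kato's filtration and the Abbes--Saito filtration recalled in Section~1 the hypothesis ``$G^{t+}_{M_i,\log}$ acts trivially on $\GG_{\overline{\eta_i}}$'' is equivalent to $\mathrm{Sw}_{M_i}(\GG)=r_i\le t$; as $\GG$ is tame along the horizontal components, the Swan divisor $D_\GG=\sum_i r_iD_i$ is supported on the vertical (special-fibre) components, all with $r_i\le t$. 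Moreover, since on $U$ the function $\pi=T_0\cdots T_r$ is invertible, one has $U\subseteq X_K$, so $U_{\overline{K}}$ is a smooth curve on which $\GG$ is a rank-one local system, tame at the punctures; in particular Grothendieck--Ogg--Shafarevich applies and $\sum_j(-1)^j\dim H^j_c(U_{\overline{K}},\GG)=\sum_j(-1)^j\dim H^j_c(U_{\overline{K}},\overline{\Q_\ell})$.

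To verify the hypothesis of Proposition~\ref{prop:action}, fix a totally tamely ramified $L/K$ of degree $e$ prime to $p$ and a character $\chi$ of $G_L$ with $m:=\mathrm{Sw}_L(\chi)>et$. By Theorem~\ref{theo:semi} choose a strictly semi-stable model $X'$ over $\OO_L$ with $U'=U_{\OO_L}$, write $f\colon U'\to\Spec L$ for the structure map, $\GG'$ for the pullback of $\GG$, and $f^*\chi$ for the pullback of $\chi$. A direct local computation shows that, because $\chi$ is pulled back from the base and $X'_s$ is reduced, $f^*\chi$ is clean, is unramified along the horizontal components, and has Swan divisor $m\sum_jV'_j$ summed over the vertical components $V'_j$. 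The projection formula gives
\[
R\Gamma_c(U'_{\overline{L}},\GG'\otimes f^*\chi)\cong R\Gamma_c(U_{\overline{K}},\GG)_L\otimes\chi,\qquad R\Gamma_c(U'_{\overline{L}},f^*\chi)\cong N\otimes\chi,
\]
where $N=R\Gamma_c(U'_{\overline{L}},\overline{\Q_\ell})$. Applying the Kato--Saito formula to $\GG'\otimes f^*\chi$ and to $f^*\chi$, and using $c_{\GG'\otimes f^*\chi}=c_{f^*\chi}$ from Proposition~\ref{prop:tensor}, I obtain
\[
\mathrm{Sw}_L\!\big(R\Gamma_c(U_{\overline{K}},\GG)_L\otimes\chi\big)=\deg c_{f^*\chi}+\mathrm{Sw}_L N=\mathrm{Sw}_L(N\otimes\chi).
\]
Since $X'$ is semi-stable, each $H^j_c(U'_{\overline{L}},\overline{\Q_\ell})$ has unipotent, hence tame, monodromy, so all its slopes vanish; by Lemma~\ref{lemma:tensoriso} the right-hand side equals $\big(\sum_j(-1)^j\dim H^j_c(U_{\overline{K}},\overline{\Q_\ell})\big)m$, which by the reduction above equals $\big(\sum_j(-1)^j\dim H^j_c(U_{\overline{K}},\GG)\big)m$.

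It remains to pass from this alternating identity to each degree. If $\GG$ is geometrically nontrivial, then $H^0_c=H^2_c=0$ and the identity reads $\mathrm{Sw}_L\!\big(H^1_c(U_{\overline{K}},\GG)_L\otimes\chi\big)=\dim H^1_c(U_{\overline{K}},\GG)\cdot\mathrm{Sw}_L(\chi)$, which is exactly the hypothesis of Proposition~\ref{prop:action}; that proposition then yields the desired triviality of $G^{t+}_{K,\log}$ on $H^1_c$. If instead $\GG$ is geometrically trivial, it is the pullback of a character $\psi$ of $G_K$, and the mult-one relation at a vertical component gives $\mathrm{Sw}_K(\psi)=r_i\le t$, so each $H^j_c(U_{\overline{K}},\GG)=H^j_c(U_{\overline{K}},\overline{\Q_\ell})\otimes\psi$ has slope $\mathrm{Sw}_K(\psi)\le t$ by Lemma~\ref{lemma:tensoriso}, giving the claim directly.

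The step I expect to be the main obstacle is the input that, after base change to $L$ and resolution, the Swan conductors $r'_j$ of $\GG'$ along the \emph{new} exceptional components of $X'_s$ still satisfy $r'_j\le et$; this is what guarantees the hypothesis $r'_j<m$ of Proposition~\ref{prop:tensor}. I would prove it by a local analysis at a node of $X_s$ with equation $xy=\pi$: after base change the model acquires an $A_{e-1}$-singularity, whose resolution inserts the components corresponding to the monomial valuations $v_i$ with $v_i(x)=i$, $v_i(y)=e-i$, interpolating between the two branches, whose endpoint Swan conductors are $er_a,er_b\le et$. I expect $i\mapsto\mathrm{Sw}_{v_i}(\GG')$ to be a convex (piecewise-linear, and genuinely linear once $\GG'$ is clean) function of $i$, so that convexity forces $r'_j\le et$ on every exceptional component. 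Making this convexity precise, via the refined Swan conductor and a reduction to the clean case, is the technical heart of the argument.
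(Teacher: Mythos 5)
Your overall route is the same as the paper's: reduce to the criterion of Proposition~\ref{prop:action}, split according to whether $\GG$ is geometrically trivial (where the tensor decomposition $H^j_c(U_{\overline{K}},\GG)=H^j_c(U_{\overline{K}},\overline{\Q_\ell})\otimes\psi$ and Lemma~\ref{lemma:tensoriso} settle the matter) or not (where $H^0_c=H^2_c=0$ isolates $H^1_c$), pass to a strictly semi-stable model over $\OO_L$ via Theorem~\ref{theo:semi}, check that the pullback of $\chi$ is clean with Swan divisor supported on the vertical components, and apply the Kato--Saito formula twice together with Proposition~\ref{prop:tensor}. All of that matches the paper's Proposition~\ref{prop:main} and Lemmas~\ref{lemma:2}--\ref{lemma:3}.

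The genuine gap is exactly the step you flag yourself: you need the Swan conductors of $\GG$ along the \emph{new} components of $X'_s$ (the exceptional divisors introduced in resolving the $A_{e-1}$-singularities at the nodes $xy=\pi$) to be at most $et$, so that the hypothesis $r'_j<m=\mathrm{Sw}_L(\chi)$ of Proposition~\ref{prop:tensor} holds, and you leave the proposed convexity argument for monomial valuations as an unproved assertion. This is not a routine verification --- it is the technical heart of the lemma, and ``convexity of $i\mapsto\mathrm{Sw}_{v_i}(\GG')$'' is not something you can wave at: the correct normalization involves the multiplicity of each exceptional component in the special fiber (equivalently, the ramification index $\tilde e_i$ of $\tilde M_i/K$), and without tracking that your claimed endpoint values $er_a, er_b$ do not even have the right meaning. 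The paper fills this gap differently: it factors $M'_i/K$ through the field $\tilde M_i$ attached to the corresponding exceptional divisor of a composition of blowups of closed points of $X$ itself, invokes Kato's Theorem~8.1 of \cite{kato1989swan} (behaviour of clean characters and their Swan divisors under blowup at closed points) to get that $G^{\tilde e_i t+}_{\tilde M_i,\log}$ acts trivially, and then uses the general inclusion $G^{e'_i\tilde e_i t+}_{M'_i,\log}\subset G^{\tilde e_i t+}_{\tilde M_i,\log}$ to conclude. If you want to keep your argument self-contained you would have to prove your convexity statement (essentially re-deriving Kato's blowup theorem in this special case); otherwise you should cite that result, as the paper does. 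A second, minor omission: in the geometrically trivial case you assert $\mathrm{Sw}_K(\psi)=r_i\le t$ ``by the mult-one relation''; the clean justification is Lemma~\ref{lemma:52}, which gives a surjection $G^{t+}_{M_i,\log}\twoheadrightarrow G^{t+}_{K,\log}$ for a vertical component of a semi-stable model, and this is how the paper argues (Lemma~\ref{lemma:forcurve}).
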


\begin{remark}
	When the relative dimension is greater than $1$, one should still be able to prove Conjecture \ref{conjecture} using the same methods used in this paper,  as long as it is true that \[\mathrm{Sw}_{K} R\Gamma_c(U_{\overline{K}},\FF) = \deg c_\FF + \mathrm{Sw}_{K} R\Gamma_c(U_{\overline{K}},\overline{\Q_\ell})\] for   smooth $\ell$-adic sheaves $\FF$ of  rank $1$ on $U$, at most tamely ramified on $X_K$ and with clean ramification with respect to $X$. 
\end{remark}

The proof is divided in two cases. First observe that, since the total constant field of $X_K$ is a finite unramified extension of $K$, we may assume that $K$ is the total constant field of $X_K$.
Then there is an exact sequence of fundamental groups
\begin{center}
	\begin{tikzcd}
		1 \arrow{r} & \pi_1(U_{\overline{K}})\arrow{r} & \pi_1(U) \arrow{r} & G_K \arrow{r} & 1.
	\end{tikzcd}
\end{center}

Let $M$ be the function field of $X$ and $\eta=\Spec  M$. We first consider the case in which the action of $\pi_1(U_{\overline{K}})$ is trivial on $\GG_{\bar{\eta}}$, and then the case in which it is non-trivial. 

To prove the first case, we shall need the following lemma:

\begin{lemma}\label{lemma:forcurve}
	In addition to the assumptions of Theorem \ref{the:main}, assume that $\GG_{\bar \eta}$ is the pullback of some $\ell$-adic representation $\HH$ of $G_K$. If
	$G_{M_i,\,\log}^{t+}$ acts trivially on $\GG_{\overline{\eta_i}}$, then $G_{K}^{t+}$ acts trivially on $\HH$.
\end{lemma}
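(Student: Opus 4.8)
The plan is to reduce the statement to a single application of Lemma \ref{lemma:52} at a component of the special fiber. The key geometric observation is that, by the definition of a semi-stable pair, $U$ is contained in the generic fiber $X_K$: \'{e}tale locally $U$ is the locus where $T_0,\dots,T_m$ are invertible, so in particular $T_0\cdots T_r=\pi$ is invertible on $U$, whence $U\subseteq X_K$. Consequently the special fiber $X_s$ is contained in $D$, and since $X$ is proper and flat over $\OO_K$ with nonempty (dense) generic fiber, $X_s$ is nonempty; thus at least one of the irreducible components $D_i$ is a component of the special fiber. I would fix such a (``vertical'') component $D_i$ and use the hypothesis for this index $i$.

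For this $D_i$ I would compute the ramification index of $K\to M_i$. In the local model $T_0\cdots T_r=\pi$ the component $D_i$ is cut out by a single $T_j$ with $0\le j\le r$, which is a uniformizer of the discrete valuation ring $\OO_{M_i}$; since $\pi=T_0\cdots T_r$ and the remaining factors $T_{j'}$ (with $j'\le r$, $j'\neq j$) are units at $\xi_i$, the valuation of $\pi$ along $D_i$ equals $1$. Hence $\OO_K\to\OO_{M_i}$ is a local homomorphism of complete discrete valuation rings of ramification index $e=1$, which is prime to $p$. Moreover the residue field $k_i$ of $M_i$ is the function field of $D_i$, a finitely generated extension of the perfect field $k$, hence separable over $k$. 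Therefore the hypotheses of Lemma \ref{lemma:52} are satisfied, and the natural map $\phi_i\colon G_{M_i}\to G_K$ induces a surjection $G_{M_i,\,\log}^{t}\twoheadrightarrow G_{K,\,\log}^{t}$ for every $t\in\Q_{>0}$; passing to the union over $t$, we obtain the surjection $\phi_i\colon G_{M_i,\,\log}^{t+}\twoheadrightarrow G_{K,\,\log}^{t+}$ noted right after that lemma.

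To finish, I would identify the $G_{M_i}$-action on $\GG_{\overline{\eta_i}}$. Because $\GG_{\bar\eta}$ is pulled back from $\HH$, the representation $\GG$ of $\pi_1(U)$ factors as $\HH$ composed with $\pi_1(U)\to G_K$; by functoriality of the maps induced by $\Spec M_i\to U\to \Spec K$ and by $\Spec M_i\to\Spec K$, the action of $G_{M_i}$ on $\GG_{\overline{\eta_i}}$ is given by $\HH\circ\phi_i$. The hypothesis that $G_{M_i,\,\log}^{t+}$ acts trivially on $\GG_{\overline{\eta_i}}$ then says precisely that $\HH$ kills $\phi_i\big(G_{M_i,\,\log}^{t+}\big)$; since the latter equals $G_{K,\,\log}^{t+}=G_K^{t+}$ by the surjectivity above, $\HH$ kills $G_K^{t+}$, which is the claim. (Here I use that, $k$ being perfect, the logarithmic and classical filtrations on $G_K$ coincide, so $G_{K,\,\log}^{t+}=G_K^{t+}$.) The only real obstacle is the geometric input in the first two paragraphs --- recognizing that $U\subseteq X_K$ forces the existence of a special-fiber component along which the ramification index is exactly $1$ and the residue extension is separable; once this is in place, the conclusion follows formally from Lemma \ref{lemma:52}.
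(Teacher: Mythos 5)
Your argument is correct and follows essentially the same route as the paper: the paper's own proof of this lemma is the single line ``This follows from Lemma \ref{lemma:52},'' and your write-up supplies exactly the intended details --- that $U\subseteq X_K$ forces a vertical component $D_i$ to exist, that along it the ramification index of $M_i/K$ is $1$ with separable residue extension, and that the resulting surjection $G_{M_i,\,\log}^{t+}\twoheadrightarrow G_{K,\,\log}^{t+}$ transports the triviality hypothesis to $\HH$. No changes needed.
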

\begin{proof}
	This follows from Lemma \ref{lemma:52}.
\end{proof}

\begin{proposition}\label{prop:main}
	Theorem \ref{the:main} holds if $\pi_1(U_{\overline{K}})$ acts trivially on $\GG_{\bar \eta}$.
\end{proposition}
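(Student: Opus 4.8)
The plan is to reduce $\GG$ to a sheaf pulled back from the base $\Spec K$, and then to separate the ramification of the coefficients from that of the geometry.

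First I would exploit the hypothesis that $\pi_1(U_{\overline{K}})$ acts trivially on $\GG_{\bar\eta}$. The sheaf $\GG$ corresponds to a continuous rank-one representation of $\pi_1(U)$, and triviality on the geometric fundamental group means this representation factors through the quotient $G_K$ in the exact sequence $1 \to \pi_1(U_{\overline{K}}) \to \pi_1(U) \to G_K \to 1$. Thus there is a character $\HH$ of $G_K$ such that $\GG \cong f^*\HH$, where $f\colon U_K \to \Spec K$ is the structure morphism of the generic fiber and $\HH$ is viewed as a smooth rank-one sheaf on $\Spec K$. In particular $\GG_{\bar\eta}$ is the pullback of $\HH$ in the sense of Lemma \ref{lemma:forcurve}.

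Next, since $\GG \cong f^*\HH$ becomes constant on the geometric generic fiber $U_{\overline{K}}$, the projection formula $Rf_{!}\,f^*\HH \cong Rf_{!}\,\overline{\Q_\ell}\otimes\HH$ yields, after passing to $U_{\overline{K}}$, a $G_K$-equivariant isomorphism
\[
H^j_c(U_{\overline{K}},\GG) \cong H^j_c(U_{\overline{K}},\overline{\Q_\ell})\otimes_{\overline{\Q_\ell}}\HH
\]
for every $j$, with $G_K$ acting diagonally. By Lemma \ref{lemma:forcurve}, the hypothesis that $G_{M_i,\,\log}^{t+}$ acts trivially on $\GG_{\overline{\eta_i}}$ for every $i$ implies that $G_K^{t+}$ acts trivially on $\HH$. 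It therefore suffices to prove that $G_K^{t+}$ acts trivially on $H^j_c(U_{\overline{K}},\overline{\Q_\ell})$: once both tensor factors are fixed by $G_K^{t+}$, every element of $G_K^{t+}$ acts as the identity on the tensor product.

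To control the constant coefficients I would invoke semi-stability. Because $(X,U)$ is a strictly semi-stable pair over $\OO_K$, the local model of Definition \ref{def:semi} exhibits $U_K$ as the generic fiber of a log-smooth degeneration, so the inertia group acts unipotently on $H^j_c(U_{\overline{K}},\overline{\Q_\ell})$. In particular the wild inertia $P = G_K^{0+}$ (which equals $G_{K,\,\log}^{0+}$ since $k$ is perfect) acts trivially, i.e. $H^j_c(U_{\overline{K}},\overline{\Q_\ell})$ is tamely ramified. Since $t\ge 0$ we have $G_K^{t+}\subseteq G_K^{0+}=P$, so $G_K^{t+}$ acts trivially on $H^j_c(U_{\overline{K}},\overline{\Q_\ell})$, and hence on $H^j_c(U_{\overline{K}},\GG)$, as required. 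The main obstacle is precisely this last tameness statement: showing that the wild inertia acts trivially on the compactly supported cohomology of the constant sheaf on the open curve $U_{\overline{K}}$. This is where strict semi-stability of $(X,U)$ is indispensable, since one must argue, via the explicit local structure (or via log-smooth nearby cycles), that the degeneration introduces no wild ramification and the monodromy is unipotent; everything else in the argument is formal.
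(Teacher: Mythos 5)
Your proposal is correct and follows essentially the same route as the paper: descend $\GG$ to a character $\HH$ of $G_K$ via the homotopy exact sequence, apply the projection formula to get $H^j_c(U_{\overline{K}},\GG)\cong H^j_c(U_{\overline{K}},\overline{\Q_\ell})\otimes\HH$, control $\HH$ by Lemma \ref{lemma:forcurve}, and use tameness of $H^j_c(U_{\overline{K}},\overline{\Q_\ell})$ (which the paper simply cites from \cite{Saito1993} rather than rederiving from unipotent monodromy). The only cosmetic difference is in the last step, where the paper passes through the slope decomposition via Lemma \ref{lemma:tensoriso} while you observe directly that $G_K^{t+}\subseteq P$ fixes both tensor factors; both are valid.
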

\begin{proof}
	In this case, by the homotopy exact sequence of \'{e}tale fundamental groups, we have that $\GG_{\bar \eta}$ is the pullback of some $\ell$-adic representation $\HH$ of $G_K$. Then
	\[ H_c^j(U_{\overline{K}},\GG) = H_c^j(U_{\overline{K}},\overline{\Q_{\ell}})\otimes \HH.\]
	By Lemma  \ref{lemma:tensoriso}, and the fact that  $H_c^j(U_{\overline{K}},\overline{\Q_{\ell}})$
	is at most tamely ramified (\cite[Corollary 2]{Saito1993}), % \cite[Lemma 1.1.6]{kato2013ramification}), 
	we have that the slope decomposition of $H_c^j(U_{\overline{K}},\GG)$ coincides with that of $\HH$, in the following sense:
	\[
	(H_c^j(U_{\overline{K}},\GG))^{(r)} = H_c^j(U_{\overline{K}},\overline{\Q_{\ell}})\otimes \HH^{(r)}.
	\]
	It follows that $G_{K,\,\log}^t$ acts trivially on $H_c^j(U_{\overline{K}},\GG)$ if and only if it acts trivially on $\HH$. By Lemma \ref{lemma:forcurve}, the result follows. 
\end{proof}

We shall now prove Theorem \ref{the:main} for the case in which
$\pi_1(U_{\overline{K}})$ does not  act trivially on $\GG_{\bar \eta}$. The core of strategy is the following:  using the Kato-Saito conductor formula and the fact that $H^0_c(U_{\overline{K}},\GG)=H^2_c(U_{\overline{K}},\GG)=0$, we show  that $H^1_c(U_{\overline{K}},\GG)$ satisfies the hypotheses of Proposition \ref{prop:action}. 

\begin{lemma}
	\label{lemma:2}
	Keep the assumptions of Theorem \ref{the:main}. Let $e$ be a natural number prime to $p$ and $L$ be a  totally tamely ramified extension of $K$ of degree $e$. If  $\chi:G_{L}\to \overline{\Q_{\ell}}^\times$ is a character such that $\mathrm{Sw}_{L}(\chi)>et$, then 
	\[
	\mathrm{Sw}_{L}( R\Gamma_c(U_{\overline{L}},\GG)\otimes \chi)=
	\rk( R\Gamma_c(U_{\overline{L}},\GG))\mathrm{Sw}_{L}(\chi).
	\]
\end{lemma}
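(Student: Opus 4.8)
The plan is to set up an auxiliary sheaf $\FF$ on $U_{\OO_L}$ whose restriction to each generic point of the special fiber realizes the character $\chi$ with large enough Swan conductor that it dominates $\GG$, and then to compute Swan conductors via the Kato--Saito conductor formula of Section 3. First I would pass to the base change to $\OO_L$: by Theorem \ref{theo:semi} there is a proper birational modification $X'\to X_{\OO_L}$ inducing an isomorphism on the open part $U'\cong U_{\OO_L}$ and making $(X',U')$ strictly semi-stable over $\OO_L$. This lets me work on a strictly semi-stable model over $\OO_L$, so that the constructions of cleanness and Swan divisors of Section 1.2 and the $0$-cycle classes of Section 3 are available. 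I would then choose a rank-one sheaf $\LL_\chi$ on $U'$ extending $\chi$ (viewed as a character of $\pi_1(U')$ by pullback through $\pi_1(U')\to G_L$) whose ramification at the codimension-one points $\xi_i'$ of the special fiber is prescribed to have very large Swan conductor, large enough that its Swan divisor strictly dominates that of $\GG$ componentwise, and clean.

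The key computation is then to apply the conductor formula twice, once to $\GG|_{U'}\otimes\LL_\chi$ and once to $\LL_\chi$ alone, and to subtract. By the dimension-one reduction recorded in Section 3, for a clean rank-one sheaf $\FF$ on $U'$ one has
\[
\mathrm{Sw}_{L} R\Gamma_c(U_{\overline{L}},\FF) = \deg c_\FF + \mathrm{Sw}_{L} R\Gamma_c(U_{\overline{L}},\overline{\Q_\ell}).
\]
Here the crucial input is Proposition \ref{prop:tensor}: since the Swan divisor of $\LL_\chi$ strictly dominates that of $\GG$ at every component, the tensor product $\GG\otimes\LL_\chi$ is again clean with $c_{\GG\otimes\LL_\chi}=c_{\LL_\chi}$. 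Because $\GG$ has rank one, the conductor formula applied to $\GG\otimes\LL_\chi$ and to $\LL_\chi$ produces the \emph{same} $0$-cycle contribution $\deg c_{\LL_\chi}$, so the two Euler--Poincaré Swan conductors agree; rearranging and using that $R\Gamma_c(U_{\overline{L}},\GG)$ concentrated in degree one gives the sign-free identity, this should yield
\[
\mathrm{Sw}_{L}\big(R\Gamma_c(U_{\overline{L}},\GG)\otimes\chi\big)=\rk\big(R\Gamma_c(U_{\overline{L}},\GG)\big)\,\mathrm{Sw}_{L}(\chi),
\]
since $\mathrm{Sw}_L(\chi)=\mathrm{Sw}_L(\LL_\chi)$ relates $c_{\LL_\chi}$ to $\rk(R\Gamma_c)\mathrm{Sw}_L(\chi)$ through the base contribution $\mathrm{Sw}_L R\Gamma_c(U_{\overline{L}},\overline{\Q_\ell})$ canceling between the two applications.

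The main obstacle I anticipate is the construction of the auxiliary character sheaf $\LL_\chi$ on $U'$ with the required \emph{cleanness} and the prescribed, strictly dominating Swan divisor at all codimension-one points simultaneously: one must extend the given $\chi\in H^1(L,\overline{\Q_\ell}^\times)$ to a class in $H^1(U',\overline{\Q_\ell}^\times)$ (equivalently, lift it along $\pi_1(U')\to G_L$), control its refined Swan conductor along each $D_i'$ so that it is a locally splitting injection, and arrange $s_i^\chi>r_i$ for the Swan divisor $D_\GG=\sum r_i D_i'$. This is exactly where the hypothesis $\mathrm{Sw}_L(\chi)>et$, together with the ramification bound on $\GG$ coming from the hypotheses of Theorem \ref{the:main}, must be used to guarantee the strict domination; I would also need to verify that the birational modification $X'\to X_{\OO_L}$ does not disturb the relevant $0$-cycle degrees, which follows because the modification is an isomorphism over $U'$ and the Swan classes are supported on and determined by the behavior at the codimension-one points of the special fiber.
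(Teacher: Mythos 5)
Your route is the paper's: pass to a strictly semi-stable model $(X',U')$ over $\OO_L$ via Theorem \ref{theo:semi}, pull $\chi$ back to a rank-one sheaf $\tilde\chi$ on $U'$, and compare the Kato--Saito formula for $\GG\otimes\tilde\chi$ with that for $\tilde\chi$ via Proposition \ref{prop:tensor}, the base term $\mathrm{Sw}_L R\Gamma_c(U_{\overline L},\overline{\Q_\ell})$ being handled by tameness of the constant-coefficient cohomology. But the two steps you defer as ``anticipated obstacles'' are where the content of the proof lies, and as written your plan has a gap at each. On the first: there is no freedom in the choice of the auxiliary sheaf. For the identification $R\Gamma_c(U_{\overline L},\GG\otimes\tilde\chi)\simeq R\Gamma_c(U_{\overline L},\GG)\otimes\chi$ you need $\tilde\chi$ to be exactly the pullback of $\chi$ along $U'\to\Spec L$, so you cannot ``prescribe'' its Swan divisor to be arbitrarily large; its Swan conductor along each $D_i'$ is forced (and equals $\mathrm{Sw}_L(\chi)$, the special fibre being reduced, by Lemma \ref{lemma:52}). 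Cleanness must then be proved, not arranged: it holds because the residue field of $L$ is perfect and $\Omega_k(\log)\hookrightarrow\Omega_{k_i}(\log)$ is a splitting injection, so the commutative square of refined Swan conductors shows $\rsw\tilde\chi$ is a locally splitting injection. This is short, but it is an argument rather than a choice.

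The more serious gap is the strict domination $\mathrm{Sw}_{M_i'}(\GG)<\mathrm{Sw}_{M_i'}(\tilde\chi)$ at \emph{every} irreducible component $D_i'$ of $D'=X'-U'$. These components include exceptional divisors of the modification $X'\to X_{\OO_L}$, lying over closed points of the special fibre of $X$; the hypothesis of Theorem \ref{the:main} bounds the ramification of $\GG$ only at the generic points of the components of $D\subset X$ and says nothing a priori about these new components. You appeal to ``the ramification bound on $\GG$ coming from the hypotheses of Theorem \ref{the:main},'' but that bound does not directly apply there. The paper closes this by factoring through a composition of blowups of closed points $\tilde X\to X$ and invoking Theorem 8.1 of \cite{kato1989swan} on the behaviour of Swan conductors under blowup, combined with the inclusion $G_{M_i',\,\log}^{e_i'\tilde e_i t+}\subset G_{\tilde M_i,\,\log}^{\tilde e_i t+}$ for the relevant ramification indices; this is precisely what reduces the lemma to the case $L=K$, $e=1$. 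Without some such argument the application of Proposition \ref{prop:tensor} on $X'$ is unjustified and the proof does not close.
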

\begin{proof}
	
	First consider the following. By Theorem \ref{theo:semi}, 
	there exists a proper birational morphism $X'\to X_{\OO_L}$ inducing an isomorphism $U'\to U_{\OO_L}$, where $U'$ is the inverse image of $U_{\OO_L}$, and such that $(X',U')$ is strictly semi-stable over $\OO_L$.
	
	Let $D'=X'-U'$ and write  $D'=\bigcup\limits_{i=1}^{n'}D_i'$, where $D_i'$ are the irreducible components of $D'$. For each $1\leq i\leq n'$ let 
	$\xi_i'$ be the generic point of $D_i'$, $\OO_{M_i'}=\OO^h_{X', \, \xi_i'}$ the henselization of the local ring at $\xi_i'$, $M_i'$ its field of fractions, and $\eta_i'=\Spec M_i'$. 
		
	There is a composition of blowups of closed points $\tilde{X}\to X$ and a point $\tilde{\xi}_i$ such that $\OO_{\tilde{X},\, \tilde{\xi}_i}=\OO_{X', \, \xi'_i}  \cap M$. Let $\tilde{M}_i$ be the field of fractions of  $\OO^h_{\tilde{X}, \, \tilde{\xi}_i}$.    Put $\tilde{\eta}_i=\Spec \tilde{M}_i$.
	Denote by $e_i'$ and $\tilde{e}_i$ the ramification indices  of $M_i'/\tilde{M}_i$ and $\tilde{M}_i/K$, respectively. We have $e=e_i'\tilde{e}_i$. 
	
	By \cite{kato1989swan}, Theorem 8.1, and the fact that $G_{M_{i},\,\log}^{t+}$ acts trivially on 
	$\GG_{\overline{\eta_{i}}}$ for every $1\leq i \leq n$, we have that $G_{\tilde{M}_{i},\,\log}^{\tilde{e}_it+}$ acts trivially on 
	$\GG_{\overline{\tilde{\eta}_{i}}}$ for every $1\leq i \leq n'$. Further, 
	since we have $G_{M_{i}',\,\log}^{e_i'\tilde{e}_i t+}\subset G_{\tilde{M}_{i},\,\log}^{\tilde{e}_i t+}$, we get that
	$G_{M_{i}',\,\log}^{et+}$ acts trivially on 
	$\GG_{\overline{\eta_{i}'}}$ for all $1\leq i \leq n'$. 
	Thus it is enough to prove that
	\[
	\mathrm{Sw}_{K}( R\Gamma_c(U_{\overline{K}},\GG)\otimes \chi)=
	\rk( R\Gamma_c(U_{\overline{K}},\GG))\mathrm{Sw}_{K}(\chi)
	\]
	for  $\chi:G_{K}\to \overline{\Q_{\ell}}^\times$  such that $\mathrm{Sw}_{K}(\chi)>t$.
	
	Put $r=\mathrm{Sw}_{K}(\chi)$ and
	denote by $\tilde{\chi}$ %:G_{M}\to \overline{\Q_\ell}^\times$ be the character induced by $\chi$.
	the pullback of $\chi$ to $U$. $\tilde \chi$ has clean ramification because the following diagram
	\[
	\begin{tikzcd}[row sep = 0.5 in, column sep = 0.5 in]
	\mm_K^r/\mm_K^{r+1} \arrow{r}{\rsw_K\chi} \arrow{d} & \Omega_k(\log)  \arrow[hookrightarrow]{d} \\
	\mm_{M_i}^r/\mm_{M_i}^{r+1} \arrow{r}{\rsw_{M_i}\tilde\chi} &  \Omega_{k_i}(\log)
	\end{tikzcd}
	\]
	is commutative. Indeed, since $\chi$ is clean and $\Omega_k(\log)\hookrightarrow\Omega_{k_i}(\log)$ is  a splitting injection, $\rsw\tilde\chi$ is a locally  splitting injection.  Further, by Lemma \ref{lemma:52}, $\mathrm{Sw}_{M_i}(\tilde{\chi})>t$ for every $i$. From the Kato-Saito conductor formula, Proposition \ref{prop:tensor}, and the fact that $(X,U)$ is semi-stable over $\OO_K$, we have that $\GG\otimes\tilde{\chi}$ is clean and
	\[\mathrm{Sw}_{K} R\Gamma_c(U_{\overline{K}},\GG\otimes \tilde{\chi}) = \deg c_{\GG\otimes \tilde{\chi}}=\deg  c_{\tilde{\chi}}.\]
	
	Again by the Kato-Saito conductor formula,
	\[
	\mathrm{Sw}_{K} R\Gamma_c(U_{\overline{K}}, \tilde{\chi}) = \deg c_{\tilde{\chi}}.
	\]
	Therefore, we have
	\[
	\mathrm{Sw}_{K} R\Gamma_c(U_{\overline{K}},\GG\otimes \tilde{\chi})=\mathrm{Sw}_{K} R\Gamma_c(U_{\overline{K}}, \tilde{\chi})
	=\mathrm{Sw}_{K} (R\Gamma_c(U_{\overline{K}},\overline{\Q_\ell})\otimes \chi).
	\]
	Since
	\[
	\mathrm{Sw}_{K} R\Gamma_c(U_{\overline{K}},\GG\otimes \tilde{\chi})=
	\mathrm{Sw}_{K}( R\Gamma_c(U_{\overline{K}},\GG)\otimes \chi)
	\]
	and
	\[
	\mathrm{Sw}_{K} (R\Gamma_c(U_{\overline{K}},\overline{\Q_\ell})\otimes \chi)=
	\rk( R\Gamma_c(U_{\overline{K}},\overline{\Q_\ell}))\mathrm{Sw}_{K}(\chi)= \rk( R\Gamma_c(U_{\overline{K}},\GG))\mathrm{Sw}_{K}(\chi),
	\]
	we conclude that 
	\[
	\mathrm{Sw}_{K}( R\Gamma_c(U_{\overline{K}},\GG)\otimes \chi)=
	\rk( R\Gamma_c(U_{\overline{K}},\GG))\mathrm{Sw}_{K}(\chi). \qedhere
	\]
\end{proof}

\begin{lemma}\label{lemma:3}
	Let the assumptions be the same as in Lemma \ref{lemma:2}, and assume further that $\pi_1(U_{\overline{K}})$ does not act trivially on $\GG$. Then
	\[
	H^j_c(U_{\overline{L}},\GG)=0
	\]
	for every $j\neq 1$.
\end{lemma}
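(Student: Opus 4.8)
The plan is to reduce the statement to the geometric generic fibre and then exploit the vanishing of $H^0_c$ and $H^2_c$ for a rank-one sheaf whose geometric monodromy is non-trivial. First I would observe that, since $L/K$ is finite, an algebraic closure $\overline{L}$ of $L$ is simultaneously an algebraic closure of $K$; identifying $\overline{L}=\overline{K}$, we have $U_{\overline{L}}=U_{\overline{K}}$ as schemes, the geometric fundamental groups $\pi_1(U_{\overline{L}})$ and $\pi_1(U_{\overline{K}})$ coincide, and $\GG$ pulls back to the same lisse rank-one sheaf. Thus it suffices to prove $H^j_c(U_{\overline{K}},\GG)=0$ for $j\neq 1$. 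Since $U_{\overline{K}}$ is a smooth, geometrically connected curve over the algebraically closed field $\overline{K}$ (recall we arranged $K$ to be the total constant field of $X_K$, which guarantees connectedness), its cohomology with compact support is concentrated in degrees $0,1,2$, so it remains only to kill $H^0_c$ and $H^2_c$.

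For degree zero I would argue as follows. As $\GG$ has rank one, $\pi_1(U_{\overline{K}})$ acts on it through a character $\rho:\pi_1(U_{\overline{K}})\to\overline{\Q_\ell}^\times$ which by hypothesis is non-trivial, so the invariants $\GG^{\pi_1(U_{\overline{K}})}$ vanish. Since $H^0_c(U_{\overline{K}},\GG)$ injects into $H^0(U_{\overline{K}},\GG)=\GG^{\pi_1(U_{\overline{K}})}$, this forces $H^0_c(U_{\overline{K}},\GG)=0$. Phrasing the vanishing through invariants rather than through non-properness of $U_{\overline{K}}$ lets me treat uniformly both the affine case and the case where $U_K=X_K$ is proper.

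For the top degree I would apply Poincaré duality on the smooth curve $U_{\overline{K}}$, obtaining $H^2_c(U_{\overline{K}},\GG)\cong H^0(U_{\overline{K}},\GG^\vee(1))^\vee$. The geometric fundamental group acts trivially on the Tate twist, so $H^0(U_{\overline{K}},\GG^\vee(1))=(\GG^\vee)^{\pi_1(U_{\overline{K}})}(1)$; because $\rho$ is non-trivial, so is the inverse character $\rho^{-1}$ governing the action on $\GG^\vee$, whence $(\GG^\vee)^{\pi_1(U_{\overline{K}})}=0$ and therefore $H^2_c(U_{\overline{K}},\GG)=0$. Combining the three vanishings gives $H^j_c(U_{\overline{L}},\GG)=H^j_c(U_{\overline{K}},\GG)=0$ for every $j\neq 1$.

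The argument is essentially formal once the non-triviality of the geometric monodromy is correctly used, and I do not expect a genuine obstacle. The only point requiring mild care is precisely that $U_{\overline{K}}$ need not be affine, which is why I route both the $H^0_c$ and the (dual) $H^2_c$ computations through invariants of $\rho$ and $\rho^{-1}$; beyond Poincaré duality and the rank-one hypothesis, no further input is needed.
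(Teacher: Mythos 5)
Your proof is correct and follows essentially the same route as the paper: reduce to degrees $0$ and $2$ on the smooth curve $U_{\overline{L}}=U_{\overline{K}}$, kill $H^2_c$ via Poincar\'e duality, and kill the relevant $H^0$ using that the invariants of a non-trivial rank-one character vanish. You are merely a bit more explicit than the paper about the dual sheaf and Tate twist appearing in the duality statement and about the inclusion $H^0_c\subseteq H^0$; no substantive difference.
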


\begin{proof}
	By Poincar\'{e} duality and the fact that $X$ is of dimension $2$, it's enough to show that 
	$H^0(U_{\overline{L}},\GG)=0$. Since $\pi_1(U_{\overline{K}})$ does not act trivially on $\GG_{\bar{\eta}}$ and $\rk (\GG)=1$, we get that $H^0(U_{\overline{L}},\GG)=\GG_{\bar{\eta}}^{\pi(U_{\overline{L}})}=0$.
\end{proof}

\begin{proof}[Proof of Theorem \ref{the:main}]
	The theorem has already been proved in Proposition \ref{prop:main} for $\GG$ such that $\pi_1(U_{\overline{K}})$ acts trivially on it, so we assume that $\pi_1(U_{\overline{K}})$ does not act trivially.   By Lemma \ref{lemma:3}, it's enough to prove that  $G_{{K},\,\log}^{t+}$ acts trivially on $H^1_c(U_{\overline{K}},\GG)$. 
	
	From Lemmas \ref{lemma:2} and \ref{lemma:3}, it follows that
	\[
	\mathrm{Sw}_{L} (H^1_c(U_{\overline{L}},\GG)\otimes \chi)= \rk(H^1_c(U_{\overline{L}},\GG))\mathrm{Sw}_L(\chi)
	\]
	for any  totally tamely ramified extension $L$ of $K$ of degree $e$ prime to $p$ and arbitrary character $\chi: G_{L}\to \overline{\Q_\ell}^\times$ satisfying $\mathrm{Sw}_L(\chi)>et$. 
	
	From Proposition \ref{prop:action}, we have that $G_{K,\,\log}^{t+}$ acts trivially on $H^1_c(U_{\overline{K}},\GG)$. Hence $G_{K,\,\log}^{t+}$ acts trivially on $H^j_c(U_{\overline{K}},\GG)$ for every $j$. 
\end{proof}

\vspace{2 cm}
\begin{ack}
	My sincere gratitude to my advisor, Professor Kazuya Kato, who kindly provided me with his invaluable advice, guidance and feedback throughout the elaboration of this paper.
	
	I would also like to thank the anonymous referee for the important comments and suggestions that helped improve this work. 
\end{ack}

\bibliographystyle{plain}
\bibliography{bibfile}

\end{document}